\documentclass[10pt,a4paper,fleqn]{article}

\usepackage{graphicx}
\usepackage{mathptmx}      

\usepackage{amsmath}
\usepackage{amstext}
\usepackage{amsthm}
\usepackage{amsfonts}
\usepackage{amssymb}
\usepackage{stmaryrd}
\usepackage{mathtools}
\usepackage{authblk}
\usepackage{epsfig}
\usepackage{subfig}
\usepackage{inputenc}
\usepackage{rotating}
\usepackage{color}

\usepackage{lineno}
\usepackage{hyperref}
\modulolinenumbers[5]

\usepackage{titlesec}
\titleformat*{\section}{\large\bfseries}

\newtheoremstyle{MyTheorem}
{\topsep}       
{\topsep}       
{\itshape}  
{}          
{\bfseries}  
{.}         
{5pt plus 1pt minus 1pt}      
{}          

\theoremstyle{MyTheorem}
\newtheorem{theorem}{Theorem}[section]
\newtheorem{lemma}[theorem]{Lemma}
\newtheorem{cond}[theorem]{Condition}

\newcommand\IPst[0]{s.t.\;\;}
\newcommand\IPmin[0]{\min\;}

\newcommand{\ts}[0]{\textstyle\sum}

\DeclareMathOperator{\argmin}{argmin}
\newcommand\cref[1]{(\ref{#1})}

\usepackage{algorithm}
\usepackage[noend]{algpseudocode}

\begin{document}

\title{Compact Linearization for Binary Quadratic Problems subject to Assignment Constraints}

\author{Sven Mallach}

\affil{Institut f\"ur Informatik \authorcr Universit\"at zu K\"oln, 50923 K\"oln, Germany}
\date{\today}

\maketitle

\begin{abstract}
We prove new necessary and sufficient conditions to carry out a compact linearization
approach for a general class of binary quadratic problems subject to assignment constraints
as it has been proposed by Liberti in 2007. The new conditions resolve inconsistencies
that can occur when the original method is used. We also present a mixed-integer linear
program to compute a minimally-sized linearization. When all the assignment constraints have
non-overlapping variable support, this program is shown to have a totally unimodular
constraint matrix. Finally, we give a polynomial-time combinatorial algorithm that is
exact in this case and can still be used as a heuristic otherwise.
\end{abstract}

\section{Introduction}

In this paper, we are concerned with binary quadratic programs (BQPs)
that comprise some assignment constraints over a set of $\{0,1\}$-variables
$x_i$, $i \in N$ where ${N = \{ 1, \dots, n\}}$ for a positive integer $n$.
More precisely, let $K$ be a set
such that for each $k \in K$ there is some index set $A_k \subseteq N$ and
such that exactly one of the variables $x_i$, $i \in A_k$, must attain the value $1$.
We assume that $N \subseteq \{ A_k \mid k \in K\}$, i.e., the
set of problem variables is covered by the union of the sets $A_k$.
Additionally, bilinear terms $y_{ij} = x_i x_j$, $i,j \in N$, may occur in
the objective function as well as in the set
of constraints and are assumed to be collected in an ordered set
$E \subset V \times V$. By commutativity, there is no loss of generality
in requiring that $i \le j$ for each $(i,j) \in E$.
Assuming an arbitrary set of $m \ge 0$ further linear constraints
$C x + D y \ge b$ where $C \in \mathbb{R}^{m \times n}$ and
$D \in \mathbb{R}^{m \times |E|}$, the associated mixed-integer
program discussed so far can be stated as follows:
\begin{align}
\IPmin  &  c^Tx + b^Ty                                             &      &            && \nonumber \\
\IPst   &  \ts_{i \in A_k} x_{i}                                  &=\;   &  1         && \mbox{for all } k \in K     \label{bqp:assign} \\
        &  C x + D y                                           &\ge\; &  b         && \nonumber \\
	&  y_{ij}                                                  &=\;   & x_i x_j   && \mbox{for all } (i, j) \in E \label{bqp:bilinear} \\
	&  x_i                                                     &\in\; & \{0,1\}    && \mbox{for all } i \in N     \nonumber
\end{align}

The particular form studied here generalizes for example the quadratic assignment
problem which is known to be NP-hard \cite{SahniGonzales76} as are
BQPs with box constraints in general \cite{Sahni74}, although there are some exceptions \cite{Allemand2001}.
While there exist approaches to tackle BQPs directly, linearizations of quadratic and, more
generally, polynomial programming problems, enable the application of well-studied
mixed-integer linear programming techniques and have hence been an active field of research
since the 1960s. This is also in the focus of this paper where we concentrate on the question
how to realize constraints~\cref{bqp:bilinear} for this particular type of problem
by means of additional variables and additional linear constraints.

\emph{Related Work.}
The seminal idea to model binary conjunctions using additional \mbox{(binary)} variables
is attributed to Fortet~\cite{Fortet59,Fortet60} and discussed by Hammer and Ru\-dea\-nu~\cite{HammerRudeanu}.
This method, that is also proposed in succeeding works by Balas~\cite{Balas64}, Zangwill~\cite{Zangwill}
and Watters~\cite{Watters}, and further discussed by Glover and Woolsey~\cite{GloverWoolsey73}, requires two
inequalities per linearization variable. Only shortly thereafter, Glover and Woolsey~\cite{GloverWoolsey74}
found that the same effect can be achieved using \emph{continuous} linearization
variables when combining one of these two inequalities with two different ones.
The outcome is a method that is until today regarded as the \lq standard
linearization\rq\ and where, in the binary quadratic context, each product
$x_{i} x_{j}$ is modeled using a variable $y_{ij} \in [0,1]$ and three constraints:
\begin{align}
  y_{ij}  &\le\;  x_{i} && \label{pck:stdlin1} \\
  y_{ij}  &\le\;  x_{j} && \label{pck:stdlin2} \\
  y_{ij}  &\ge\;  x_{i} + x_{j} -1 && \label{pck:stdlin3}
\end{align}

Succeeding developments include a linearization technique without any additional
variables but using a family of (exponentially many) inequalities by Balas and Mazzola~\cite{BalasMazzola84}. 
Sherali and Adams showed how the introduction and subsequent linearization of additional
nonlinear constraints can be used to obtain tighter linear programming (LP) relaxations
for binary problems with (initially) linear constraints and a quadratic
objective function~\cite{SheraliAdams1986}. This
approach was later generalized in~\cite{SheraliAdamsRLT} to the so-called
reformulation-linearization-technique (RLT). A single application of the RLT to the bounds
constraints $0 \le x_i \le 1$ of a binary program leads exactly to the
above \lq standard linearization\rq\ as proposed by Glover and Woolsey in~\cite{GloverWoolsey74}.

Further linearization methods with more emphasis on problems where all
nonlinearities appear only in the objective function are by Glover~\cite{Glover75},
Oral and Kettani~\cite{OralKettani92a,OralKettani92b}, Chaovalitwongse et al.~\cite{CPP2004}, Sherali
and Smith~\cite{SheraliSmith2007}, Furini and Traversi~\cite{FuriniTraversi}, and, for general
integer variables, by Billionnet et al.~\cite{Billionnet2008}. Specialized
formulations for unconstrained binary
quadratic programming problems have been given by Gueye and Michelon~\cite{GueyeMichelon2009},
and Hansen and Meyer~\cite{HansenMeyer2009}.

For the particular binary quadratic problem as introduced at the beginning,
Liberti developed a very elegant alternative \emph{compact} linearization
approach that exploits the structure imposed by the assignment constraints~\cite{Liberti2007}.
It can be seen as a very special application of the RLT that first determines, for each
set $A_k$, $k \in K$, another set $B_k$ of original variables which are then multiplied
with the according assignment constraint related to $A_k$ yielding new additional equations.
The choice of the sets $B_k$ needs to be made such that the set of products $F$ introduced
this way covers the initial set of products $E$. Finally, the products in
$F$ are replaced by linearization variables. As already noted by
Liberti, conceptually, this approach is in line with and a generalization
of the strategy used by Frieze and Yadegar~\cite{FriezeYadegar83} for the
quadratic assignment problem.

\emph{Contribution.}
In this paper, we show that one of the conditions originally specified
as being necessary for the sets $B_k$ to hold in order to yield a correct compact
linearization is in fact neither necessary nor sufficient in every case. As a
consequence, when applying the original method to compute the sets $B_k$, inconsistent
value assignments to the set of created variables can result.
We reveal a new single necessary and sufficient condition to obtain a consistent linearization
and prove its correctness. As a positive side effect, this condition can lead to smaller
sets $B_k$ and hence to a smaller number of necessary additional variables and constraints.
In~\cite{Liberti2007}, also an integer program to compute minimum cardinality sets $B_k$
(and hence a minimum number of additional equations) has been given. We present a
similar mixed-integer linear program that establishes the new conditions and can now be
used to minimize both, the number of created additional variables and additional
constraints via a weighted objective function.
Moreover, we show that the constraint matrix of this program is totally unimodular if
all the sets $A_k$, $k \in K$, are pairwise disjoint. Additionally, we provide an exact
combinatorial and polynomial-time algorithm to compute optimal sets $B_k$ in this case.
With small modifications, the algorithm can also be used as a heuristic for the more
general problem with overlapping sets $A_k$.

\emph{Outline.}
In Sect.~\ref{s:CmpLin}, we review the compact linearization approach as developed
in~\cite{Liberti2007} and show that consistency of the linearization variables with
their associated original variables is not implied by the conditions specified.
New necessary conditions for a consistent linearization are characterized
in Sect.~\ref{s:RevCmpLin} together with a correctness proof. Further, a mixed-integer
linear program and a new combinatorial algorithm to compute compact linearizations
are given. We close this paper with a conclusion and final remarks in Sect.~\ref{s:Concl}.

\section{Compact Linearization for Binary Quadratic Problems}\label{s:CmpLin}
The compact linearization approach for binary quadratic
problems with assignment constraints by Liberti~\cite{Liberti2007}
is as follows:
For each index set $A_k$, there is a corresponding
index set $B_k \subseteq N$
such that for each $j \in B_k$ the assignment
constraint \cref{bqp:assign} w.r.t.~$A_k$ is multiplied with~$x_j$:
\begin{align}
    \sum_{i \in A_k} x_i x_j     &=\;  x_{j} && \mbox{for all } j \in B_k,\; \mbox{for all } k \in K \label{cmp:eqn0}
\end{align}

Each induced product $x_i x_j$ is then replaced by
a continuous linearization variable $y_{ij}$ (if $i \le j$) or $y_{ji}$ (otherwise).
We denote the set of such created bilinear terms
with $F$ and we may again assume that $i \le j$ holds for each
$(i,j) \in F$.
More formally, Liberti defined the set $F$~as
$$F = \{ \phi(i,j) \mid (i,j) \in \bigcup_{k \in K} A_k \times B_k \}$$
where $\phi(i,j) = (i,j)$ if $i \le j$ and $\phi(i,j) = (j,i)$ otherwise.
Using $F$, equations \cref{cmp:eqn0} can be
rewritten as follows:
\begin{align}
    \sum_{i \in A_k, (i,j) \in F} y_{ij}\ + \sum_{i \in A_k, (j,i) \in F} y_{ji}    &=\;  x_{j} && \mbox{for all } j \in B_k,\; \mbox{for all } k \in K \label{cmp:eqn1}
\end{align}

The choice of the sets $B_k$ is crucial for the correctness
and the size of the resulting linearized problem
formulation, as it directly determines the cardinality of
the set $F$ as well as the number of additional equations.
On the one hand, the sets $B_k$ must clearly be chosen such that
$F \supseteq E$.
On the other hand, this possibly (and in practice almost surely)
involves the creation of additional linearization variables for
some $i \in A_k$ and $j \in B_k$ where neither $(i,j) \in E$
nor $(j,i) \in E$.
Hence, the number of variables, $|F|$, will usually be larger than
$|E|$ but, as is discussed by Liberti and called
\emph{constraint-side compactness}, the number of equations can
be considerably smaller than $3 |E|$ as it would be with
the \lq standard\rq\ approach and the formulation will still be
at least as strong in terms of the LP relaxation of the~problem.

This latter property can, e.g., be shown by arguing that
solutions obeying all the equations~\cref{cmp:eqn1} also satisfy
the inequalities~\cref{pck:stdlin1}, \cref{pck:stdlin2}, and
\cref{pck:stdlin3} for all the variables introduced based on $F$.
Since the added equations are all equations of the original
problem multiplied by original variables, this 
also proves correctness of the linearization -- no solutions
feasible for the original problem can be excluded like this.
Liberti follows exactly this strategy.
His two main requirements for the choice of the sets is that
the covering conditions $E \subseteq F$ and $A_k \subseteq B_k$
for all $k \in K$ must be satisfied.

The condition $E \subseteq F$ 
and the definition of $F$ together imply that for each $(i,j) \in E$
there must be some $k \in K$ such that either $i \in A_k$ and $j \in B_k$ or
$j \in A_k$ and $i \in B_k$ which finally establishes that $(i,j) \in F$.
However, as we will show in the following, the
condition $A_k \subseteq B_k$ is not sufficient in every case in order
to ensure that $y_{ij} \le x_j$ and $y_{ij} \le x_i$ simultaneously hold
for all $(i,j) \in F$.

To see this, let $k \in K$ be such that $A_k \subsetneq B_k$. We can assume
without loss of generality that such a $k$ exists since otherwise $A_k = B_k$
must hold for all $k \in K$ and this implies that $E \subseteq F$ can only be
established if, for all $(i,j) \in E$, there is an $l \in K$ such that $i,j \in A_l$.
In this case, however, all bilinear terms could be resolved trivially as
$y_{ij} = 0$ for all $i \neq j$ and $y_{ij} = x_i$ for all
$i = j$. So let now $j \in B_k \setminus A_k$.
In our linearization system, we obtain for $j$ an equation:
\begin{align}
    \sum_{a \in A_k, (a,j) \in F} y_{aj}\ + \sum_{a \in A_k, (j,a) \in F} y_{ja}   &=\;  x_{j} && \label{eq:rhsj}
\end{align}

Now fix any particular $i = a \in A_k$ and assume, without loss of
generality, that $i < j$ ($i = j$ is impossible since $j \not\in A_k$).
Hence $(i,j) \in F$
and equation \cref{eq:rhsj} clearly establishes $y_{ij} \le x_j$.
The condition $A_k \subseteq B_k$ now requires
that there must be another equation for $i$ of the form:
\begin{align}
    \sum_{a \in A_k, (a,i) \in F} y_{ai}\ + \sum_{a \in A_k, (i,a) \in F} y_{ia}   &=\;  x_{i} && \label{eq:rhsi}
\end{align}

However, since $j \not\in A_k$, the variable $y_{ij}$
does not appear on the left hand side of \cref{eq:rhsi}. So if there
is no other $l \in K$, $l \neq k$, such that $i \in B_l$ and
$j \in A_l$, then there will be no equation that ever enforces
$y_{ij} \le x_i$.

The opposite case where, for some $(i,j) \in F$, $j \in A_k$ and
$i \in B_k \setminus A_k$ but there is no $l \in K$, $l \neq k$, such
that $j \in B_l$ and $i \in A_l$, leads to the converse problem that
there are equations that enforce
$y_{ij} \le x_i$ but none that enforce $y_{ij} \le x_j$.

Based on these observations, one can easily construct small examples
where it holds that $A_k \subseteq B_k$ for all $k \in K$, and $E \subseteq F$, but
there exist $(i,j) \in F$ for which
inconsistent value assignments to $y_{ij}$, $x_i$ and $x_j$ result.
This remains particularly true when the originally proposed integer
program
is used to determine and minimize the total
cardinality of the sets $B_k$.

\section{Revised Compact Linearization}\label{s:RevCmpLin}

In the previous section, we have seen that the condition
$A_k \subseteq B_k$ for all $k \in K$, is \emph{not sufficient}
in every case to enforce that the inequalities $y_{ij} \le x_i$
and $y_{ij} \le x_j$ are satisfied for all $(i,j) \in F$.
The discussion also already indicated that the two following
conditions are \emph{necessary} to enforce this.

\begin{cond} \label{cond:c1}
For each $(i,j) \in F$, there is a $k \in K$
such that $i \in A_k$ and $j \in B_k$.
\end{cond}
\begin{cond} \label{cond:c2}
For each $(i,j) \in F$, there is an $l \in K$
such that $j \in A_l$ and $i \in B_l$.
\end{cond}

For these two conditions, clearly, $k = l$ is a valid choice.
In this section, we will prove that these conditions
are also \emph{sufficient} in order to yield a correct linearization.
This also means that the inclusion $A_k \subseteq B_k$ is \emph{not}
a \emph{necessary} condition.

\begin{theorem}
Let $(i,j) \in F$. If Conditions~\ref{cond:c1} and~\ref{cond:c2}
are satisfied, then it holds that $y_{ij} \le x_i$, $y_{ij} \le x_j$ and
$y_{ij} \ge x_i + x_j - 1$.
\end{theorem}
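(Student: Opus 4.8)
The plan is to establish each of the three inequalities by extracting exactly the information carried by the linearization equations~\cref{cmp:eqn1}. Throughout I will use that every $y$-variable is continuous and (by the standard linearization reasoning) lies in $[0,1]$, and that each $x_i \in \{0,1\}$ in any integer-feasible point; but in fact the inequalities should hold even for the LP relaxation, so I would argue purely from the equations and the nonnegativity $y_{ij} \ge 0$ of the linearization variables.

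First I would prove $y_{ij} \le x_j$. By Condition~\ref{cond:c1} there is a $k \in K$ with $i \in A_k$ and $j \in B_k$. Since $j \in B_k$, equation~\cref{cmp:eqn1} is present for this $j$ and $k$, and its right-hand side is exactly $x_j$. Because $i \in A_k$ and $(i,j) \in F$, the term $y_{ij}$ appears on the left-hand side (in whichever of the two sums matches the orientation of the pair). As every other summand is a nonnegative $y$-variable, dropping them yields $y_{ij} \le \sum (\cdots) = x_j$. Symmetrically, $y_{ij} \le x_i$ follows from Condition~\ref{cond:c2}: here I pick $l \in K$ with $j \in A_l$ and $i \in B_l$, so the equation~\cref{cmp:eqn1} written for the index $i$ and set $B_l$ has right-hand side $x_i$ and contains $y_{ij}$ on its left (now with $j$ playing the role of the summation index $a \in A_l$). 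Nonnegativity of the remaining terms again gives $y_{ij} \le x_i$. The role of the two conditions is precisely to guarantee the existence of \emph{both} equations, one certifying each of the two upper bounds; this is the gap in the original $A_k \subseteq B_k$ requirement identified in Sect.~\ref{s:CmpLin}.

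The third inequality $y_{ij} \ge x_i + x_j - 1$ is the one I expect to be the main obstacle, since it is not visible from a single equation. My plan is to use one of the equations already invoked, say~\cref{cmp:eqn1} for $j \in B_k$ with right-hand side $x_j$, and to isolate $y_{ij}$ as
\begin{equation*}
y_{ij} = x_j - \sum_{\substack{a \in A_k,\, a \neq i}} y_{aj},
\end{equation*}
where the sum ranges over the remaining members of $A_k$ (with the appropriate orientation of each pair absorbed into the notation). The assignment constraint~\cref{bqp:assign} for $A_k$ states $\sum_{a \in A_k} x_a = 1$, hence $\sum_{a \in A_k,\, a \neq i} x_a = 1 - x_i$. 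If I can bound each remaining summand by $y_{aj} \le x_a$ — which is available because each such pair $(a,j)$ lies in $F$ and, by Condition~\ref{cond:c1} applied to it, admits its own equation certifying $y_{aj} \le x_j$ as above, and more usefully by the same argument a bound $y_{aj} \le x_a$ from the companion equation — then
\begin{equation*}
y_{ij} \ge x_j - \sum_{\substack{a \in A_k,\, a \neq i}} x_a = x_j - (1 - x_i) = x_i + x_j - 1,
\end{equation*}
which is exactly the desired bound. The delicate point I must check carefully is the orientation bookkeeping in~\cref{cmp:eqn1} (the split into the $(a,j) \in F$ and $(j,a) \in F$ sums) and the justification that every residual term $y_{aj}$ indeed satisfies $y_{aj} \le x_a$; the latter is guaranteed because each such pair belongs to $F$ and the Conditions, applied to it, supply the needed certifying equation. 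Assembling these three bounds completes the proof that Conditions~\ref{cond:c1} and~\ref{cond:c2} are sufficient for a correct linearization.
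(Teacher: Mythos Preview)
Your argument is correct. The derivations of $y_{ij}\le x_j$ and $y_{ij}\le x_i$ coincide with the paper's proof: you invoke exactly the two equations guaranteed by Conditions~\ref{cond:c1} and~\ref{cond:c2} and drop the remaining nonnegative summands.

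Where you diverge is in the treatment of $y_{ij}\ge x_i+x_j-1$. The paper proceeds by a case distinction on the integer values of $x_i$ and $x_j$: if either is zero the inequality is trivial, and if $x_i=x_j=1$ one supposes $y_{ij}<1$, finds some other $y_{aj}>0$ in the equation for $j\in B_k$, infers $x_a=1$ from the already established bound $y_{aj}\le x_a$ together with integrality of $x_a$, and reaches a contradiction with the assignment constraint on $A_k$. Your route avoids this detour entirely: you isolate $y_{ij}$ in the same equation, bound every residual term by $y_{\phi(a,j)}\le x_a$ (available because $\phi(a,j)\in F$ for every $a\in A_k$, so Conditions~\ref{cond:c1} and~\ref{cond:c2} apply to each such pair and the first part of the proof delivers the bound), and then substitute the assignment constraint $\sum_{a\in A_k,\,a\neq i}x_a=1-x_i$. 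This is both shorter and strictly stronger: it does not use integrality of $x$, so it directly certifies that the compact linearization dominates the standard one at the level of the LP relaxation, a fact the paper states but whose proof there only covers integer $x$. One small point to make explicit when you write it up is that the bound you need on each residual term is $y_{\phi(a,j)}\le x_a$, and which of the two Conditions furnishes it depends on whether $a\le j$ or $a>j$; your closing sentence already acknowledges this orientation bookkeeping.
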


\begin{proof}
By Condition~\ref{cond:c1}, there is a $k \in K$ such that
$i \in A_k$, $j \in B_k$ and hence the equation
\begin{align}
    \ts_{a \in A_k, (a,j) \in F} y_{aj} + \ts_{a \in A_k, (j,a) \in F} y_{ja}   &=\;  x_{j} && \tag{$*$} \label{eqn:rhsj_proof}
\end{align}
exists and has $y_{ij}$ on its left hand side. This establishes
$y_{ij} \le x_j$.

Similarly, by Condition~\ref{cond:c2}, there is an $l \in K$ such that
$j \in A_l$, $i \in B_l$ and hence the equation
\begin{align*}
    \ts_{a \in A_l, (a,i) \in F} y_{ai} + \ts_{a \in A_l, (i,a) \in F} y_{ia}   &=\;  x_{i} &&  \tag{$**$} \label{eqn:rhsi_proof}
\end{align*}
exists and has $y_{ij}$ on its left hand side. This establishes $y_{ij} \le x_i$.

As a consequence, $y_{ij} = 0$ whenever $x_i = 0$ or $x_j = 0$. In this case, the
inequality $y_{ij} \ge x_i + x_j - 1$ is trivially satisfied.
Now let $x_i = x_j = 1$. Then the right hand sides of both~(\ref{eqn:rhsj_proof}) and~(\ref{eqn:rhsi_proof})
are equal to $1$. The variable $y_{ij}$ (is the only one that) occurs on the
left hand sides of both of these equations. If $y_{ij} = 1$, this is consistent
and correct. So suppose that $y_{ij} < 1$ which implies that, in
equation~(\ref{eqn:rhsj_proof}), there is some $y_{aj}$ (or $y_{ja}$), $a \neq i$, with
$y_{aj} > 0$ ($y_{ja} > 0$).
Then, by the previous arguments and integrality of the $x$-variables, $x_a = 1$.
This is however a contradiction to the assumption that $x_i = 1$
as both $i$ and $a$ are contained in $A_k$.
\end{proof}

Minimum cardinality sets $B_k$ (steering the number of additional
equations) and minimum cardinality sets $F$ (steering the
number of additional variables) can be obtained using a mixed-integer
program. The following one is a modification of the integer program
in~\cite{Liberti2007} in order to implement
conditions~\ref{cond:c1} and~\ref{cond:c2}, but even more importantly, to enforce them not only for
$(i,j) \in E$ but for $(i,j) \in F$.
\begin{align}
    \IPmin  &  w_{eqn} \bigg( \sum_{k \in K} \sum_{1 \le i \le n} z_{ik} \bigg) + w_{var} \bigg( \sum_{1 \le i \le n}\sum_{i \le j \le n} f_{ij} \bigg)    \span \span  \span\span \nonumber \\
\IPst   &  f_{ij}                                         &=\;   & 1          && \mbox{for all } (i, j) \in E  \label{minB:EinF0}  \\
        &  f_{ij}                                         &\ge\; & z_{jk}     && \mbox{for all } k \in K, i \in A_k, j \in N, i \le j \label{minB:EinF3a} \\
	&  f_{ji}                                         &\ge\; & z_{jk}     && \mbox{for all } k \in K, i \in A_k, j \in N, j < i \label{minB:EinF3b} \\
        &  \sum_{k: i \in A_k} z_{jk} \qquad\qquad\qquad  &\ge\; & f_{ij}     && \mbox{for all } 1 \le i \le j \le n \label{minB:EinF1} \\
	&  \sum_{k: j \in A_k} z_{ik}                     &\ge\; & f_{ij}     && \mbox{for all } 1 \le i \le j \le n \label{minB:EinF2} \\
	&  f_{ij}                                         &\in\; & [0,1]    && \mbox{for all } 1 \le i \le j \le n \nonumber \\
        &  z_{ik}                                         &\in\; & \{0,1\}    && \mbox{for all } k \in K, 1 \le i \le n  \nonumber
\end{align}

The formulation involves binary variables $z_{ik}$ to be equal to $1$ if $i \in B_k$
and equal to zero otherwise. Further, to account for whether $(i,j) \in F$, there
is a (continuous) variable $f_{ij}$ for each $1 \le i \le j \le n$ that will be
equal to $1$ in this case and $0$ otherwise.
The constraints \cref{minB:EinF0} fix those $f_{ij}$ to $1$ where the corresponding
pair $(i,j)$ is contained in $E$. Further, whenever some $j \in N$
is assigned to some $B_k$, then we need the corresponding variables $(i,j) \in F$
or $(j,i) \in F$ for all $i \in A_k$ which is established by \cref{minB:EinF3a} and
\cref{minB:EinF3b}. Finally, if $(i,j) \in F$, then we require the two above
conditions to be satisfied, namely that there is a $k \in K$ such that $i \in A_k$ and $j \in B_k$ \cref{minB:EinF1}
and a (possibly different) $k \in K$ such that $j \in A_k$ and $i \in B_k$ \cref{minB:EinF2}.
Emphasis on either the number of created linearization variables or constraints can be given
using the weights $w_{var}$ and $w_{eqn}$ in the objective function.

The underlying problem to be solved is a special two-stage covering problem whose
complexity inherently depends on \emph{how} the sets
$A_k$, $k \in K$, cover the set $N$. In particular, if
$A_k \cap A_l = \emptyset$ for all
$k, l \in K$, $k \neq l$, then the choices to be made in constraints~\cref{minB:EinF1} and
\cref{minB:EinF2} are unique and the problem can be solved as a linear
program because the constraint matrix arising in this special case is totally
unimodular (TU). To show this, we make use of the following
lemma as stated in~\cite{NemWol}:
\begin{lemma}\label{lemma:TUnonzero}
If the $\{-1,0,1\}$-matrix $A$ has no more than two nonzero entries in each
column and if $\sum_i a_{ij} = 0$ whenever column $j$ has two nonzero coefficients, then $A$
is TU.
\end{lemma}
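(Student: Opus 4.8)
The plan is to prove Lemma~\ref{lemma:TUnonzero} via the determinantal characterisation of total unimodularity: a $\{-1,0,1\}$-matrix is TU if and only if every square submatrix has determinant in $\{-1,0,1\}$. I would argue by induction on the order $t$ of a square submatrix $B$ of $A$, after first recording the crucial \emph{inheritance} observation that every submatrix of $A$ again satisfies the hypotheses of the lemma. Indeed, deleting rows and columns can never increase the number of nonzero entries in a remaining column; and if such a column retains both of its two original nonzeros, their (necessarily opposite) signs are untouched, so it still sums to zero. Hence it suffices to establish $\det B \in \{-1,0,1\}$ for an arbitrary square submatrix $B$, since $B$ itself obeys the same structural conditions imposed on $A$.

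For the base case $t = 1$, the single entry of $B$ lies in $\{-1,0,1\}$ by assumption, so its determinant does as well. For the inductive step, I assume the claim for all square submatrices of order $t-1$ and let $B$ have order $t$, distinguishing three cases according to the column structure of $B$. If some column of $B$ is entirely zero, then $\det B = 0$. If some column of $B$ has exactly one nonzero entry $b_{pq} \in \{-1,1\}$, then Laplace expansion along that column gives $\det B = \pm\, b_{pq} \det B'$, where $B'$ is the order-$(t-1)$ submatrix obtained by deleting row $p$ and column $q$; by the induction hypothesis $\det B' \in \{-1,0,1\}$, whence $\det B \in \{-1,0,1\}$.

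The remaining case is the one I expect to be the crux: every column of $B$ has \emph{exactly} two nonzero entries. By hypothesis each such column sums to zero, so summing all rows of $B$ yields the zero row vector, i.e.\ $\mathbf{1}^{\mathsf T} B = 0$. This exhibits a nontrivial linear dependence among the rows of $B$ and forces $\det B = 0 \in \{-1,0,1\}$. Combining the three cases completes the induction, so every square submatrix of $A$ has determinant in $\{-1,0,1\}$, which is precisely the assertion that $A$ is TU. The only genuinely delicate points are the inheritance observation (ensuring the induction hypothesis applies to both $B$ and $B'$) and the vanishing-column-sum argument in the final case; the rest is routine cofactor bookkeeping.
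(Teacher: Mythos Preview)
Your argument is correct: the inheritance observation is sound, and the three-case induction (zero column, singleton column via Laplace expansion, all columns doubly occupied forcing $\mathbf{1}^{\mathsf T}B=0$) is exactly the classical proof of this sufficient condition for total unimodularity. Note, however, that the paper does \emph{not} supply its own proof of Lemma~\ref{lemma:TUnonzero}; it is quoted verbatim as a known result from Nemhauser and Wolsey~\cite{NemWol} and used as a black box in the proof of Theorem~\ref{thm:TU}. Your write-up is essentially the standard textbook demonstration one would find there, so there is nothing to contrast---you have simply filled in what the paper deliberately outsourced.
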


\begin{theorem}\label{thm:TU}
If $A_k \cup A_l = \emptyset$, for all $k,l \in K$, $l \neq k$, then the
constraint matrix of the above mixed-integer program is TU.
\end{theorem}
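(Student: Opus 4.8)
The plan is to apply Lemma~\ref{lemma:TUnonzero}, but \emph{not} to the constraint matrix $A$ directly: the column of a variable $f_{ij}$ generally has more than two nonzero entries, since $f_{ij}$ may occur in the fixing equation \cref{minB:EinF0}, in the $f \ge z$ inequalities \cref{minB:EinF3a}/\cref{minB:EinF3b}, and in both coupling inequalities \cref{minB:EinF1} and \cref{minB:EinF2}. Instead I would exploit that total unimodularity is invariant under transposition and verify the hypotheses of Lemma~\ref{lemma:TUnonzero} for the \emph{rows} of $A$, i.e.\ for the columns of $A^T$. Concretely, I would show that under the disjointness assumption $A_k \cap A_l = \emptyset$ every row of $A$ contains at most two nonzero entries, all entries lie in $\{-1,0,1\}$, and whenever a row has exactly two nonzeros they are a $+1$ and a $-1$ and hence sum to zero. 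The box constraints $0 \le f_{ij} \le 1$ and $0 \le z_{ik} \le 1$ contribute only unit rows, which can be appended to a TU matrix without destroying total unimodularity, so they may be set aside in the core argument.

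Next I would go through the constraint groups one by one. Each equation in \cref{minB:EinF0} reads $f_{ij} = 1$ and thus has a single nonzero coefficient $+1$. Each inequality in \cref{minB:EinF3a} has the form $f_{ij} - z_{jk} \ge 0$ and each inequality in \cref{minB:EinF3b} the form $f_{ji} - z_{jk} \ge 0$; both therefore contain exactly two nonzeros, one $+1$ and one $-1$, independently of any disjointness assumption.

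The essential step is the treatment of \cref{minB:EinF1} and \cref{minB:EinF2}, and this is where the hypothesis enters. The left-hand side of \cref{minB:EinF1} is the sum $\sum_{k : i \in A_k} z_{jk}$ over all index sets containing $i$; by the covering assumption $i$ belongs to at least one $A_k$, and pairwise disjointness forces it to belong to exactly one, say $A_{k(i)}$. Hence the sum collapses to the single term $z_{j,k(i)}$, and the row becomes $z_{j,k(i)} - f_{ij} \ge 0$ with exactly two nonzeros $+1$ and $-1$. The same argument applied to \cref{minB:EinF2}, using that $j$ lies in a unique $A_{k(j)}$, turns that row into $z_{i,k(j)} - f_{ij} \ge 0$. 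With every row now having at most two nonzeros that sum to zero whenever there are two, Lemma~\ref{lemma:TUnonzero} applies to $A^T$, and transpose-invariance of TU yields that $A$ is TU.

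I expect the only real obstacle to be conceptual rather than computational: one must recognize that the natural column-wise application of the lemma fails and that the correct object is the transpose, and one must pin down precisely why disjointness is exactly the property that bounds the support of each row of \cref{minB:EinF1} and \cref{minB:EinF2} by two. The remaining verifications -- that every coefficient lies in $\{-1,0,1\}$ and that the two-nonzero rows are balanced -- are routine once the sums have been collapsed.
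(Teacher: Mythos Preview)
Your proposal is correct and follows essentially the same route as the paper: both arguments apply Lemma~\ref{lemma:TUnonzero} to $A^T$ rather than $A$, verify row-by-row that \cref{minB:EinF3a}--\cref{minB:EinF2} each contribute exactly one $+1$ and one $-1$ (the disjointness hypothesis being used precisely to collapse the sums in \cref{minB:EinF1} and \cref{minB:EinF2} to a single term), note that \cref{minB:EinF0} contributes single-entry rows, and then invoke transpose-invariance of TU. Your explicit handling of the box constraints as appended unit rows is a minor addition that the paper leaves implicit.
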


\begin{proof}
We interpret the constraint set \cref{minB:EinF3a}-\cref{minB:EinF2}
as the rows of a matrix $A = [ F\ Z ]$ where $F$ is the upper triangular matrix defined by
$\{ f_{ij} \mid 1 \le i \le j \le n \}$ and
$Z = (z_{ik})$ for $i \in N$ and $k \in K$.
Constraints \cref{minB:EinF3a} and \cref{minB:EinF3b} yield exactly one $1$-entry
in $F$, and exactly one $-1$-entry in $Z$.
Conversely, constraints \cref{minB:EinF1} and \cref{minB:EinF2} yield exactly one
$-1$-entry in $F$, and -- since $|\{k: i \in A_k\}| = 1$ for all $i \in N$ -- exactly
one $1$-entry in $Z$. Hence, each row $i$ of $A$ has exactly two nonzero entries and
$\sum_j a_{ij} = 0$. Moreover, the variable fixings~\cref{minB:EinF0} correspond to rows with
only a single nonzero entry or can equivalently be interpreted as removing columns from $A$
causing some of the other rows to have now less than two entries.
Thus, by Lemma~\ref{lemma:TUnonzero} and by the fact
that the transpose of any TU matrix is TU, $A$ is TU.
\end{proof}

Likewise, an exact solution can be obtained using a combinatorial algorithm
(listed as Algorithm~\ref{alg:construct}) where we assume that, for each
$i \in N$, the unique index $k: i \in A_k$ is given as $K(i)$.
Basically, an initial set $F_1 = E$ will then require some indices $i$
to be assigned to some unique
sets $B_k$. This may
lead to further necessary $y$-variables yielding a set $F_2 \supseteq F_1$ which
in turn possibly requires further unique extensions of the sets $B_k$
and so on until a steady state is reached.
The asymptotic running time of this algorithm can be bounded by $O(n^3)$.

\begin{algorithm}[h]
\begin{small}
\begin{algorithmic}[0]
\Function{ConstructSets}{Sets $E$, $K$ and $A_k$ for all $k \in K$}
\ForAll {$k \in K$}
\State $B_k \gets \emptyset$
\EndFor
\State $F \gets E$
\State $F_{new} \gets E$
\While{$\emptyset \neq F_{add} \gets \Call{Append}{F, F_{new}, K, A_k, B_k} $}
\State $F \gets F \cup F_{add}$
\State $F_{new} \gets F_{add}$
\EndWhile
\Return $F$ and $B_k$ for all $k \in K$
\EndFunction

\Function{Append}{Sets $F$, $F_{new}$, $K$, $A_k$ and $B_k$ for all $k \in K$}
\State $F_{add} \gets \emptyset$
\ForAll {$(i,j) \in F_{new}$}
\State $k^* \gets K(i)$
\State $l^* \gets K(j)$
\If {$j \not\in B_k$}
\State $B_k^* \gets B_k^* \cup \{j\}$
\ForAll {$a \in A_k^*$}
    \If {$a \le j$ and $(a,j) \not\in F$}
    \State $F_{add} \gets F_{add} \cup \{ (a, j) \}$
    \ElsIf {$a > j$ and $(j,a) \not\in F$}
    \State $F_{add} \gets F_{add} \cup \{ (j, a) \}$
    \EndIf
\EndFor
\EndIf
\If {$i \not\in B_l^*$}
\State $B_l^* \gets B_l^* \cup \{i\}$
\ForAll {$a \in A_l^*$}
    \If {$a \le i$ and $(a,i) \not\in F$}
    \State $F_{add} \gets F_{add} \cup \{ (a, i) \}$
    \ElsIf {$a > i$ and $(i,a) \not\in F$}
    \State $F_{add} \gets F_{add} \cup \{ (i, a) \}$
    \EndIf
\EndFor
\EndIf
\EndFor
\Return $F_{add}$
\EndFunction
\end{algorithmic}
\end{small}
\caption{A Simple Algorithm to construct $F$ and the sets $B_k$ for all $k \in K$.}
\label{alg:construct}
\end{algorithm}

For the more general setting with overlapping $A_k$-sets and hence
$|\{k: i \in A_k\}| \ge 1$ for $i \in N$, the above proof of Theorem~\ref{thm:TU}
fails and indeed, one can construct small artificial instances that have nonintegral
optima. Nonetheless, the combinatorial algorithm can still be
used when equipped with a routine to determine the indices $k^*$ and $l^*$.
One can follow, e.g., the following heuristic idea:
To ease notation, let $a(i, k) = 1$ if $i \in A_k$ and $a(i,k) = 0$ otherwise.
Similarly, let $b(i, k) = 1$ if $i \in B_k$ and $b(i,k) = 0$ otherwise.
Adding some $i$ to some $B_k$ for the first time involves potentially creating additional
variables $(u,i)$ or $(i,u) \in F$ for $u \in A_k$. More precisely, whether such a variable
must be newly created depends on whether or not there already is some $l \in K$ where
$u \in A_l$ and $i \in B_l$ or ${i \in A_l}$ and ${u \in B_l}$.
Hence, the number of necessarily created variables when adding $i$ to $B_k$ is:
$c(i,k) = \sum_{u \in A_k} ( \min_{l \in K} 1 - \max \{a(u,l) b(i,l), a(i,l) b(u,l)\})$.
So for a pair $(i,j) \in F$, we select $k^* = \argmin_{k: i \in A_k} c(j,k)$ 
and $l^* = \argmin_{k: j \in A_k} c(i,k)$. 
Each such choice is a locally best one that neither respects the interdependences with any other choices
nor the implications of the corresponding potentially induced new pairs $(a, j)$ (or $(j,a)$)
for $a \in A_k^*$ and $(a, i)$ (or $(i,a)$) for $a \in A_l^*$.

\section{Conclusion and Final Remarks}\label{s:Concl}

In this paper, we introduced new necessary and sufficient
conditions to apply the compact linearization approach for
binary quadratic problems subject to assignment constraints
as proposed by Liberti in~\cite{Liberti2007}. These conditions are proven to lead
to consistent value assignments for all linearization variables
introduced. Further, a mixed-integer program has been presented
that can be used to compute a linearization with the minimum number
of additional variables and constraints. We also showed that,
in the case where all the assignment constraints have non-overlapping
variable support, this problem can be solved as a linear program as
its constraint matrix is totally unimodular. Alternatively, an exact
polynomial-time combinatorial algorithm is proposed that can also
be used in a heuristic fashion for the more general setting with
overlapping variable sets in the assignment constraints.

\bibliographystyle{abbrv}
\bibliography{bibfile}

\begin{thebibliography}{10}

\bibitem{SheraliAdams1986}
W.~P. Adams and H.~D. Sherali.
\newblock A tight linearization and an algorithm for zero-one quadratic
  programming problems.
\newblock {\em Manage. Sci.}, 32(10):1274--1290, Oct. 1986.

\bibitem{SheraliAdamsRLT}
W.~P. Adams and H.~D. Sherali.
\newblock {\em A Reformulation-Linearization Technique for Solving Discrete and
  Continuous Nonconvex Problems}, volume~31 of {\em Nonconvex Optimization and
  its Applications}.
\newblock Springer, 1999.

\bibitem{Allemand2001}
K.~Allemand, K.~Fukuda, T.~M. Liebling, and E.~Steiner.
\newblock A polynomial case of unconstrained zero-one quadratic optimization.
\newblock {\em Math. Program.}, 91(1):49--52, 2001.

\bibitem{Balas64}
E.~Balas.
\newblock Extension de l'algorithme additif \`{a} la programmation en nombres
  entiers et \`{a} la programmation non lin\'{e}aire.
\newblock {\em Comptes rendus de l'Acad\'{e}mie des Sciences Paris}, May 1964.

\bibitem{BalasMazzola84}
E.~Balas and J.~B. Mazzola.
\newblock Nonlinear 0--1 programming: {I}. {L}inearization techniques.
\newblock {\em Math. Program.}, 30(1):1--21, 1984.

\bibitem{Billionnet2008}
A.~Billionnet, S.~Elloumi, and A.~Lambert.
\newblock {\em Linear Reformulations of Integer Quadratic Programs}, pages
  43--51.
\newblock Springer, Berlin, Heidelberg, 2008.

\bibitem{CPP2004}
W.~Chaovalitwongse, P.~M. Pardalos, and O.~A. Prokopyev.
\newblock A new linearization technique for multi-quadratic 0-1 programming
  problems.
\newblock {\em Oper. Res. Lett.}, 32(6):517--522, Nov. 2004.

\bibitem{Fortet59}
R.~Fortet.
\newblock L'alg\`{e}bre de boole et ses applications en recherche
  op\'{e}rationnelle.
\newblock {\em Cahiers du Centre d'Etudes de Recherche Op\'{e}rationnelle},
  1:5--36, 1959.

\bibitem{Fortet60}
R.~Fortet.
\newblock Applications de l'alg\`{e}bre de boole en recherche
  op\'{e}rationnelle.
\newblock {\em Revue de la Soci\'{e}t\'{e} Fran\c{c}aise de Recherche
  Op\'{e}rationnelle}, 4:17--26, 1960.

\bibitem{FriezeYadegar83}
A.~M. Frieze and J.~Yadegar.
\newblock On the quadratic assignment problem.
\newblock {\em Discrete Appl. Math.}, 5(1):89--98, 1983.

\bibitem{FuriniTraversi}
F.~Furini and E.~Traversi.
\newblock Extended linear formulation for binary quadratic problems.
\newblock {\em Optimization Online}, 2013.

\bibitem{Glover75}
F.~Glover.
\newblock Improved linear integer programming formulations of nonlinear integer
  problems.
\newblock {\em Manage. Sci.}, 22(4):455--460, 1975.

\bibitem{GloverWoolsey73}
F.~Glover and E.~Woolsey.
\newblock Further reduction of zero-one polynomial programming problems to
  zero-one linear programming problems.
\newblock {\em Oper. Res.}, 21(1):156--161, 1973.

\bibitem{GloverWoolsey74}
F.~Glover and E.~Woolsey.
\newblock Converting the 0-1 polynomial programming problem to a 0-1 linear
  program.
\newblock {\em Oper. Res.}, 22(1):180--182, Feb. 1974.

\bibitem{GueyeMichelon2009}
S.~Gueye and P.~Michelon.
\newblock A linearization framework for unconstrained quadratic (0-1) problems.
\newblock {\em Discrete Appl. Math.}, 157(6):1255--1266, Mar. 2009.

\bibitem{HammerRudeanu}
P.~L. Hammer and S.~Rudeanu.
\newblock {\em Boolean Methods in Operations Research and Related Areas},
  volume~7 of {\em \"{O}konometrie und Unternehmensforschung / Econometrics and
  Operations Research}.
\newblock Springer, Berlin, Heidelberg, 1968.

\bibitem{HansenMeyer2009}
P.~Hansen and C.~Meyer.
\newblock Improved compact linearizations for the unconstrained quadratic 0-1
  minimization problem.
\newblock {\em Discrete Appl. Math.}, 157(6):1267--1290, Mar. 2009.

\bibitem{Liberti2007}
L.~Liberti.
\newblock Compact linearization for binary quadratic problems.
\newblock {\em 4OR}, 5(3):231--245, 2007.

\bibitem{NemWol}
G.~L. Nemhauser and L.~A. Wolsey.
\newblock {\em Integer and Combinatorial Optimization}.
\newblock John Wiley \& Sons, Inc., New York, NY, USA, 1988.

\bibitem{OralKettani92a}
M.~Oral and O.~Kettani.
\newblock A linearization procedure for quadratic and cubic mixed-integer
  problems.
\newblock {\em Oper. Res.}, 40(1-supplement-1):S109--S116, 1992.

\bibitem{OralKettani92b}
M.~Oral and O.~Kettani.
\newblock Reformulating nonlinear combinatorial optimization problems for
  higher computational efficiency.
\newblock {\em European J. Oper. Res.}, 58(2):236--249, 1992.

\bibitem{Sahni74}
S.~Sahni.
\newblock Computationally related problems.
\newblock {\em SIAM J. on Computing}, 3(4):262--279, 1974.

\bibitem{SahniGonzales76}
S.~Sahni and T.~Gonzalez.
\newblock P-complete approximation problems.
\newblock {\em J. ACM}, 23(3):555--565, Jul. 1976.

\bibitem{SheraliSmith2007}
H.~D. Sherali and J.~C. Smith.
\newblock An improved linearization strategy for zero-one quadratic programming
  problems.
\newblock {\em Optim. Lett.}, 1(1):33--47, 2007.

\bibitem{Watters}
L.~J. Watters.
\newblock Reduction of integer polynomial programming problems to zero-one
  linear programming problems.
\newblock {\em Oper. Res.}, 15:1171--1174, 1967.

\bibitem{Zangwill}
W.~I. Zangwill.
\newblock Media selection by decision programming.
\newblock {\em J. Advert. Res.}, 5:23--37, 1965.

\end{thebibliography}

\end{document}